\numberwithin{equation}{section}
\newtheorem{theorem}{\bf Theorem}[section]
\newtheorem{lemma}{Lemma}[section]
\newtheorem{Definition}{Definition}[section]
\newcommand{\xqedhere}[2]{%
\rlap{\hbox to#1{\hfil\llap{\ensuremath{#2}}}}}
\title{Incremental tensor regularized least squares with multiple right-hand sides}
\author{
	Zhengbang Cao\footnote{School of Mathematical Sciences, Ocean University of China, Qingdao 266100, China.
		E-Mail: {\tt caozhengbang@stu.ouc.edu.cn}},
	Pengpeng Xie\footnote{Corresponding author: School of Mathematical Sciences, Ocean University of China, Qingdao 266100, China.
		E-Mail: {\tt xie@ouc.edu.cn}. 
	}
}
\date{}
\begin{document}
	\maketitle
\begin{abstract}
Solving linear discrete ill-posed problems for third order tensor equations based on a tensor t-product has attracted much attention. 
But when the data tensor is produced continuously, current algorithms are not time-saving.
	Here, we propose an incremental tensor regularized least squares (t-IRLS) algorithm  with the t-product that incrementally computes the solution to the tensor regularized least squares (t-RLS) problem with multiple lateral slices on the right-hand side. More specifically, we update its solution by solving a t-RLS problem with a single lateral slice on the right-hand side 
	whenever a new horizontal sample arrives, instead of solving the t-RLS problem from scratch.
The t-IRLS algorithm is well suited for large data sets and real time operation.
 Numerical examples are presented to demonstrate the efficiency of our algorithm.
	\\ \hspace*{\fill} \\
	{\bf Key words:} t-product; incremental tensor; tensor regularized least squares 
    \\ \hspace*{\fill} \\
    {\bf AMS subject classifications:} 15A69, 65F10, 65F22
\end{abstract}	
\section{Introduction}

\hskip 2em 
We consider the tensor regularized least squares (t-RLS) problem with multiple lateral slices on the right-hand side
\begin{equation}\label{tensor Tikhonov regularization}
	\mathop{\min}\limits_{\mathcal{X}\in\mathbb{R}^{n\times c\times p}}
	\left\{ {\left\|\mathcal{A}*{\mathcal{X}}-{\mathcal{B}}\right\|}^2_F + \lambda^2{\left\|{\mathcal{L}*\mathcal{X}}\right\|}^2_F\right\},
\end{equation}
where $\mathcal{A}\in\mathbb{R}^{m\times n\times p}$, $\mathcal{B}\in\mathbb{R}^{m\times c\times p}$, $\mathcal{X}\in\mathbb{R}^{n\times c\times p}$ with $c>1$ and $\lambda>0$ is a regularization parameter. Several forms of the regularization operator $\mathcal{L}$ are presented in \cite{Lothar2021,ReichelGolubKahanTikhonov2021} and here we focus on $\mathcal{L}=\mathcal{I}$, an identity tensor 
throughout this paper. The operator $*$ denotes the tensor-tensor t-product introduced in the seminal work \cite{Kilmer2011,Kilmer2013}
, which has been proved to be a useful tool with a large number of applications, such as image processing \cite{Kilmer2013,Martin2013AnOT,Soltani2015,Tarzanagh2018}, signal processing \cite{Chan2016,Liu2018ImprovedRT,Long2019LowRT}, tensor recovery and robust tensor PCA \cite{Kong2018tSchattenp,Liu2018ImprovedRT}, data completion and denoising \cite{Hu2017MovingOD,Long2019LowRT,Wang2019NoisyLT}. 
  Compared to the tensor least squares (t-LS) of the form
\begin{equation*}
	\mathop{\min}\limits_{\mathcal{X}\in\mathbb{R}^{n\times c\times p}} \left\{{\left\|\mathcal{A}*{\mathcal{X}}-{\mathcal{B}}\right\|}^2_F\right\},
\end{equation*}
 in which the severe ill-conditioning of $\mathcal{A}$ and the error in $\mathcal{B}$ may cause a large propagated error in computing the solution, solving a nearby problem (\ref{tensor Tikhonov regularization}) can give a much more meaningful approximation \cite{Lothar2021,ReichelGolubKahanTikhonov2021}.
 Several methods, such as tensor Golub-Kahan method  \cite{Reichel2021WeightedTG,2021ElGuide,ReichelGolubKahanTikhonov2021,ReichelApplNM2021}, tensor Arnoldi method \cite{Lothar2021,ReichelApplNM2021}, tensor GMRES algorithm \cite{2021ElGuide,ReichelApplNM2021} and tensor generalized singular value decomposition algorithm \cite{Zhang2021CSDA} are researched for the t-RLS problem.

 \hskip 2em
 In practical applications, we encounter some cases where only partial data sets are known at first, and the new data sets are available in the next time step or are arriving continuously over time. For this type of situation, incremental algorithms are particularly effective and  various methods have been studied. The general framework of incremental tensor analysis, which efficiently computes a compact summary for high-order and high-dimensional data was developed in \cite{Sun2008IncrementalTA}. Zhang et al. \cite{ZhangXIao2016} proposed the incremental algorithm that incrementally computes the solution to the regularized least squares problem. In addition, Zeng and Ng \cite{Zeng2021} designed the incremental CANDECOMP/PARAFAC tensor decomposition by an alternating minimization method. 
 In this paper, we are interested in the case that a new horizontal sample is acquired in the t-RLS problem (\ref{tensor Tikhonov regularization}).
 Obviously, in such a case, conventional algorithms that typically compute the result from scratch whenever a new horizontal sample comes are highly inefficient.
 Inspired by the novel algorithm presented in \cite{ZhangXIao2016}, we introduce an incremental algorithm t-IRLS that incrementally computes the solution to the t-RLS problem.
 In essence, to compute the new solution when a new sample arrives,
 our algorithm updates the old one by solving a t-RLS problem with a single lateral slice on the right-hand side,
  instead of solving the t-RLS problem with $c$ lateral slices on the right-hand side.
  Therefore, the computational cost of our algorithm becomes more pronounced when $c$ is large.

 \hskip 2em The paper is organized as follows. In Section 2, we review basic definitions and notations. Section 3 recalls the tensor Golub-Kahan-Tikhonov (t-GKT) algorithm and details the t-IRLS algorithm to solve the t-RLS problem. We perform numerical experiments to check the effectiveness and efficiency of the t-IRLS algorithm 
 in Section 4. Finally, we draw some conclusions in the last section.
\section{Preliminaries}	
\hskip 2em We start this section with fundamental notions and properties of tensors based on the t-product.
\subsection{Notation and indexing}
\hskip 2em Throughout this paper, real third-order tensors denoted by calligraphic script letters are considered. Capital letters refer to matrices, and lower bold case letters to vectors. The $i$th frontal slice and $j$th lateral slice of tensor $\mathcal{A}$ will be denoted by $\mathcal{A}^{(i)}$ and $\vec{\mathcal{A}}_j$ respectively. 
An element $\mathbf{a}\in\mathbb{R}^{1\times 1\times n}$ is called a tubal scalar of length $n$, which will play a role similar to scalars in $\mathbb{R}$ and $\mathbf{0}$ refers to the tensor whose frontal slices are all zeros.

\hskip 2em
For $\mathcal{A}\in \mathbb{R}^{n_1\times n_2\times n_3}$ with $n_3$ frontal slices $\mathcal{A}^{(i)}$, then
\begin{equation*}
	\texttt{bcirc}(\mathcal{A})=\begin{bmatrix}
		\mathcal{A}^{(1)}&\mathcal{A}^{(n_3)}&\cdots&\mathcal{A}^{(2)}\\
		\mathcal{A}^{(2)}&\mathcal{A}^{(1)}&\cdots&\mathcal{A}^{(3)}\\
		\vdots&\vdots&\ddots&\vdots\\
		\mathcal{A}^{(n_3)}&\mathcal{A}^{(n_3-1)}&\cdots&\mathcal{A}^{(1)}
	\end{bmatrix}
\end{equation*}
is a block  circulant matrix of size $n_1n_3\times n_2n_3$.
The command \texttt{unfold} reshapes a tensor $\mathcal{A}\in \mathbb{R}^{n_1\times n_2 \times n_3}$ into an $n_1n_3\times n_2$ matrix, whereas the $\mathtt{fold}$ command is the inverse defined as
\begin{equation*}
	\mathtt{unfold}({\mathcal{A}})=\begin{bmatrix}
		\mathcal{A}^{(1)}\\\mathcal{A}^{(2)}\\ \vdots \\\mathcal{A}^{(n_3)}
	\end{bmatrix},\ \mathtt{fold}(\mathtt{unfold}(\mathcal{A}))=\mathcal{A}.
\end{equation*}
\subsection{Discrete Fourier transform}	
\hskip 2em The discrete Fourier transform (DFT) on $v\in \mathbb{R}^n$, denoted as $\bar{v}$, is given by
$\bar{v}=F_nv\in \mathbb{C}^n$. 
Here $F_n$ is the DFT matrix
\begin{equation*}
	F_n=
	\begin{bmatrix}
		1&1&1&\cdots&1\\
		1&\omega&\omega^2&\cdots&\omega^{n-1}\\
		\vdots&\vdots&\vdots&\ddots&\vdots\\
		1&\omega^{n-1}&\omega^{2(n-1)}&\cdots&\omega^{(n-1)(n-1)}
	\end{bmatrix},
\end{equation*}
where $\omega=e^{\frac{-2\pi \mathtt i}{n}}$ is a primitive $n$th root of unity with  $\texttt{i}=\sqrt{-1}$ and $F_n$ satisfies $F_n^{\mathrm{H}}F_n=F_nF_n^{\mathrm{H}}=nI_n.$ The block circulant matrix can be block diagonalized by the DFT, i.e.,
\begin{equation}\label{operator}
	(F_{n_3}\otimes I_{n_1})\cdot\texttt{bcirc}(\mathcal{A})\cdot(F_{n_3}^{-1}\otimes I_{n_2})=\bar{A},
\end{equation}
where $\otimes$ denotes the Kronecker product and 	$\bar{A}=\texttt{diag}(\bar{\mathcal{A}}^{(1)},\bar{\mathcal{A}}^{(2)},\ldots,\bar{\mathcal{A}}^{(n_3)})$.

\hskip 2em
Actually, taking the fast Fourier transform (FFT) along each tubal scalar of $\mathcal{A}$ generates a new tensor $\bar{\mathcal{A}}$ with frontal slices $\bar{\mathcal{A}}^{(i)}$,
\begin{equation*}
	\bar{\mathcal{A}}=\mathtt{fft}(\mathcal{A},[],3)=\mathtt{fold}\left(\begin{bmatrix}
		\bar{\mathcal{A}}^{(1)}\\\bar{\mathcal{A}}^{(2)}\\\vdots\\\bar{\mathcal{A}}^{(n_3)}
	\end{bmatrix}\right).
\end{equation*}


\subsection{Definitions and propositions}
\begin{Definition}{(t-product) \cite{Kilmer2013}}
	Let $\mathcal{A}\in \mathbb{R}^{n_1\times n_2 \times n_3}$ and $\mathcal{B}\in \mathbb{R}^{n_2\times n_4 \times n_3}$. The t-product $\mathcal{A}*\mathcal{B}$ is the tensor $\mathcal{C}\in \mathbb{R}^{n_1\times n_4 \times n_3}$ defined by
	\begin{equation*}
		\mathcal{C}=\mathtt{fold}(\mathtt{bcirc}(\mathcal{A})\cdot \mathtt{unfold}(\mathcal{B})).
	\end{equation*}
\end{Definition}
Notice that the t-product reduces to the standard matrix multiplication when $n_3=1$. The following lemma is important for conducting t-product based computations.
\begin{lemma}\label{relation}\cite{Kilmer2013}
	For $\mathcal{A}$, $\mathcal{B}$ and $\mathcal{C}$ of appropriate size, the following statements hold:
	\begin{equation*}
		\mathcal{C}=\mathcal{A}*\mathcal{B} \iff \bar{C}=\bar{A}\cdot\bar{B},\quad
		\mathcal{C}=\mathcal{A}+\mathcal{B} \iff \bar{C}=\bar{A}+\bar{B}.
	\end{equation*}
\end{lemma}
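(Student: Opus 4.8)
The plan is to derive both equivalences directly from the block-diagonalization identity \eqref{operator} together with the defining relation $\texttt{unfold}(\mathcal{A}*\mathcal{B})=\texttt{bcirc}(\mathcal{A})\cdot\texttt{unfold}(\mathcal{B})$ of the t-product. The guiding principle is that the correspondence $\mathcal{A}\mapsto\bar{A}$ is merely a change of basis in the block-circulant representation, so it converts t-products into block-diagonal matrix products and tensor sums into matrix sums.

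First I would record the elementary fact that left-multiplication by the Kronecker factor $F_{n_3}\otimes I_{n_1}$ on $\texttt{unfold}(\mathcal{C})$ produces exactly $\texttt{unfold}(\bar{\mathcal{C}})$: by construction $\bar{\mathcal{C}}=\mathtt{fft}(\mathcal{C},[],3)$ applies the DFT matrix $F_{n_3}$ tube-by-tube along the third mode, and the block structure of $F_{n_3}\otimes I_{n_1}$ acting on the vertical stack of frontal slices $\mathcal{C}^{(1)},\dots,\mathcal{C}^{(n_3)}$ performs precisely that transform. The same identity holds with $n_1$ replaced by the appropriate dimension when the factor is applied to $\texttt{unfold}(\mathcal{A})$ or $\texttt{unfold}(\mathcal{B})$.

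Next, assuming $\mathcal{C}=\mathcal{A}*\mathcal{B}$, I would begin with $\texttt{unfold}(\mathcal{C})=\texttt{bcirc}(\mathcal{A})\cdot\texttt{unfold}(\mathcal{B})$, left-multiply both sides by $F_{n_3}\otimes I_{n_1}$, and insert the identity $(F_{n_3}^{-1}\otimes I_{n_2})(F_{n_3}\otimes I_{n_2})=I_{n_2n_3}$ between $\texttt{bcirc}(\mathcal{A})$ and $\texttt{unfold}(\mathcal{B})$. Collapsing the first three factors by \eqref{operator} and recognizing $(F_{n_3}\otimes I_{n_2})\texttt{unfold}(\mathcal{B})=\texttt{unfold}(\bar{\mathcal{B}})$ then yields $\texttt{unfold}(\bar{\mathcal{C}})=\bar{A}\cdot\texttt{unfold}(\bar{\mathcal{B}})$. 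Since $\bar{A}=\texttt{diag}(\bar{\mathcal{A}}^{(1)},\dots,\bar{\mathcal{A}}^{(n_3)})$ is block diagonal, this single equation is equivalent to the $n_3$ slice-wise identities $\bar{\mathcal{C}}^{(i)}=\bar{\mathcal{A}}^{(i)}\bar{\mathcal{B}}^{(i)}$, i.e. to $\bar{C}=\bar{A}\cdot\bar{B}$; and because $F_{n_3}\otimes I_{n_1}$ is invertible, every step is reversible, giving $\mathcal{C}=\mathcal{A}*\mathcal{B}\iff\bar{C}=\bar{A}\cdot\bar{B}$. The additive claim is immediate from linearity: $\texttt{bcirc}(\mathcal{A}+\mathcal{B})=\texttt{bcirc}(\mathcal{A})+\texttt{bcirc}(\mathcal{B})$ and the mode-3 DFT is linear, so $\bar{\mathcal{C}}^{(i)}=\bar{\mathcal{A}}^{(i)}+\bar{\mathcal{B}}^{(i)}$ for all $i$, reversibly since the DFT is a bijection.

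The only real obstacle is the bookkeeping in the first step: making the index conventions explicit enough to see that $F_{n_3}\otimes I_{n_1}$ genuinely realizes $\mathtt{fft}(\cdot,[],3)$ on the $\texttt{unfold}$ representation, and keeping the ambient dimensions straight when applying \eqref{operator} (identity $I_{n_1}$ on the output side, $I_{n_2}$ on the input side) so that the inserted block $(F_{n_3}^{-1}\otimes I_{n_2})(F_{n_3}\otimes I_{n_2})$ has the compatible size $n_2n_3\times n_2n_3$. Once these conventions are pinned down, the proof is a short chain of equalities; should one prefer a self-contained treatment, \eqref{operator} itself follows from the classical diagonalization of a scalar circulant by $F_{n_3}$, applied block-wise.
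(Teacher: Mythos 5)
Your proposal is correct: the paper itself offers no proof of this lemma (it is quoted from Kilmer et al.\ \cite{Kilmer2013}), and your argument is precisely the standard derivation found there — multiply $\mathtt{unfold}(\mathcal{C})=\texttt{bcirc}(\mathcal{A})\cdot\mathtt{unfold}(\mathcal{B})$ by $F_{n_3}\otimes I_{n_1}$, insert $(F_{n_3}^{-1}\otimes I_{n_2})(F_{n_3}\otimes I_{n_2})$, invoke \eqref{operator}, and read off the slice-wise identities from the block-diagonal structure, with invertibility of the DFT giving the reverse implications and linearity giving the additive claim. The key bookkeeping step you flag, that $(F_{n_3}\otimes I_{n_1})\,\mathtt{unfold}(\mathcal{C})=\mathtt{unfold}(\bar{\mathcal{C}})$ realizes $\mathtt{fft}(\cdot,[],3)$, is verified exactly as you describe, so no gap remains.
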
	
\begin{Definition}{(identity tensor) \cite{Kilmer2013}}
	The identity tensor $\mathcal{I}\in \mathbb{R}^{n\times n\times n_3}$ is the tensor with
	$I^{(1)}$ being the $n\times n$ identity matrix, and other frontal slices being zeros.
\end{Definition}
\begin{Definition}{(tensor transpose) \cite{Kilmer2013}}
	If $\mathcal{A}\in \mathbb{R}^{n_1\times n_2\times n_3}$, then $\mathcal{A}^{\mathrm{T}}$ is the $n_2\times n_1\times n_3$ tensor obtained by transposing each of the frontal slices and then reversing the order of transposed frontal slices 2 through $n_3$.	
\end{Definition}
\begin{Definition}{(inverse tensor) \cite{Kilmer2013}}\label{t-inverse}
	An $n\times n\times n_3$ tensor $\mathcal{A}$ has an inverse $\mathcal{B}$, provided that $\mathcal{A}*\mathcal{B}=\mathcal{I}_{nnn_3}$ and $ \mathcal{B}*\mathcal{A}=\mathcal{I}_{nnn_3}.$
\end{Definition}
\begin{Definition}(orthogonal tensor)\cite{Kilmer2013}
An $n\times n \times n_3$ real-valued tensor $\mathcal{Q}$ is orthogonal if $\mathcal{Q}^{\mathrm{T}}*\mathcal{Q}=\mathcal{Q}*\mathcal{Q}^{\mathrm{T}}=\mathcal{I}_n$.
\end{Definition}
\begin{Definition}\cite{Kilmer2013}
	For $\mathcal{A}\in \mathbb{R}^{n_1\times n_2\times n_3}$,
	the Frobenius norm of $\mathcal{A}$ is defined as
	\begin{equation*}
		{\left\|\mathcal{A}\right\|}_F=\sqrt{\sum_{ijk}{\left|a_{ijk}\right|}^2}.
	\end{equation*}
The length of any nonzero $\vec{\mathcal{X}}\in \mathbb{R}^{n\times 1\times n_3}$ is given as
\begin{equation*}
	\|\vec{\mathcal{X}}\|=\frac{\|\vec{\mathcal{X}}^{\mathrm{T}}*\vec{\mathcal{X}}\|_F}{\|\vec{\mathcal{X}}\|_F}.
\end{equation*}
\end{Definition}
\begin{Definition}{(Moore-Penrose inverse of tensor) \cite{Jin2017}}
Let $\mathcal{A}\in \mathbb{R}^{n_1\times n_2 \times n_3}$. If there exists a tensor $\mathcal{X}\in \mathbb{R}^{n_2\times n_1\times n_3 }$ such that
\begin{equation*}		\mathcal{A}*\mathcal{X}*\mathcal{A}=\mathcal{A},\ \mathcal{X}*\mathcal{A}*\mathcal{X}=\mathcal{X},\ {(\mathcal{A}*\mathcal{X})}^{\mathrm{T}}=\mathcal{A}*\mathcal{X},\ {(\mathcal{X}*\mathcal{A})}^{\mathrm{T}}=\mathcal{X}*\mathcal{A},
\end{equation*}
then $\mathcal{X}$ is called the Moore-Penrose inverse of the tensor $\mathcal{A}$ and is denoted by $\mathcal{A}^{\dagger}$.
\end{Definition}
\begin{theorem}(t-SVD and t-QR)\cite{Martin2013AnOT}
Let $\mathcal{A}$ be an $n_1\times n_2\times n_3$ real-valued tensor. Then $\mathcal{A}$ can be factored as
\begin{equation*}
	\mathcal{A}=\mathcal{U}*\mathcal{S}*\mathcal{V}^{\mathrm{T}},\ \mathrm{and}\  \mathcal{A}=\mathcal{Q}*\mathcal{R},
\end{equation*}
where $\mathcal{U}\in\mathbb{R}^{n_1\times n_1\times n_3}$, $\mathcal{V}\in\mathbb{R}^{n_2\times n_2\times n_3}$, $\mathcal{Q}\in\mathbb{R}^{n_1\times n_1\times n_3}$ are orthogonal tensor, $\mathcal{S}\in\mathbb{R}^{n_1\times n_2\times n_3}$ is f-diagonal, i.e., each of its frontal slice is a diagonal matrix, and $\mathcal{R}\in\mathbb{R}^{n_1\times n_2\times n_3}$ is f-upper triangular, whose frontal slices are all upper triangular matrix. Specially, the tensor on the diagonal of $\mathcal{S}$ is called the singular tube.

\end{theorem}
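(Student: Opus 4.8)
The plan is to pass to the Fourier domain, where \eqref{operator} turns the block-circulant structure underlying the t-product into a block-diagonal one, invoke the classical matrix SVD and the matrix QR factorization frontal slice by frontal slice, and then transform back. First I would form $\bar{\mathcal{A}}=\mathtt{fft}(\mathcal{A},[],3)$ with frontal slices $\bar{\mathcal{A}}^{(1)},\dots,\bar{\mathcal{A}}^{(n_3)}\in\mathbb{C}^{n_1\times n_2}$, and for each $i$ take a full matrix SVD $\bar{\mathcal{A}}^{(i)}=\bar{\mathcal U}^{(i)}\bar{\mathcal S}^{(i)}(\bar{\mathcal V}^{(i)})^{\mathrm H}$ and a full matrix QR factorization $\bar{\mathcal{A}}^{(i)}=\bar{\mathcal Q}^{(i)}\bar{\mathcal R}^{(i)}$. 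Stacking these as frontal slices and applying the inverse DFT along the third mode produces candidate tensors $\mathcal U,\mathcal S,\mathcal V$ and $\mathcal Q,\mathcal R$. Since tensor transposition corresponds, in the Fourier domain, to replacing each frontal slice by its conjugate transpose — a fact I would verify directly from the definition of $\mathcal{A}^{\mathrm T}$ together with the DFT, using that reversing the frontal slices $2$ through $n_3$ negates the exponent of $\omega$ — Lemma~\ref{relation} shows that $\mathcal U*\mathcal S*\mathcal V^{\mathrm T}$ and $\mathcal Q*\mathcal R$ have Fourier representations that are block-diagonal with blocks $\bar{\mathcal U}^{(i)}\bar{\mathcal S}^{(i)}(\bar{\mathcal V}^{(i)})^{\mathrm H}$ and $\bar{\mathcal Q}^{(i)}\bar{\mathcal R}^{(i)}$ respectively, each equal to $\bar{\mathcal A}^{(i)}$; hence $\mathcal A=\mathcal U*\mathcal S*\mathcal V^{\mathrm T}=\mathcal Q*\mathcal R$.

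Next I would check the structural properties. Orthogonality: because $(\bar{\mathcal U}^{(i)})^{\mathrm H}\bar{\mathcal U}^{(i)}=\bar{\mathcal U}^{(i)}(\bar{\mathcal U}^{(i)})^{\mathrm H}=I$ for every $i$, Lemma~\ref{relation} yields $\mathcal U^{\mathrm T}*\mathcal U=\mathcal U*\mathcal U^{\mathrm T}=\mathcal I$, and likewise for $\mathcal V$ and $\mathcal Q$. The f-diagonal property of $\mathcal S$ and the f-upper-triangular property of $\mathcal R$ hold because each $\bar{\mathcal S}^{(i)}$ is diagonal and each $\bar{\mathcal R}^{(i)}$ is upper triangular, while the inverse DFT along the third mode acts within each tube and replaces it by a fixed linear combination of its entries, so that the off-diagonal (resp. strictly lower-triangular) zero pattern is preserved simultaneously in every frontal slice.

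The point that needs genuine care — and what I expect to be the main obstacle — is the assertion that $\mathcal U,\mathcal S,\mathcal V,\mathcal Q,\mathcal R$ can be taken \emph{real}. For a real tensor $\mathcal A$ the slices of $\bar{\mathcal A}$ obey the conjugate symmetry $\bar{\mathcal A}^{(1)}\in\mathbb{R}^{n_1\times n_2}$ and $\overline{\bar{\mathcal A}^{(i)}}=\bar{\mathcal A}^{(n_3-i+2)}$ for $2\le i\le n_3$ (with the middle slice real when $n_3$ is even). To inherit this symmetry in the factors I would choose real factorizations for the self-conjugate slices — possible since those matrices are themselves real — and, for each conjugate pair of indices, define the factorization of one slice to be the entrywise complex conjugate of the factorization selected for its partner. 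Then $\bar U,\bar S,\bar V,\bar Q,\bar R$ satisfy exactly the same conjugate symmetry as $\bar A$, so their inverse DFTs along the third mode are real-valued tensors; in particular the singular tubes on the diagonal of $\mathcal S$ come out real. Uniqueness is not claimed in the statement, so this explicit construction — establishing existence — is all that is required.
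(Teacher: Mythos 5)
Your construction is correct and is essentially the standard argument behind this theorem: the paper itself offers no proof (it simply cites Martin et al.), and the cited source proceeds exactly as you do — factor each frontal slice of $\mathtt{fft}(\mathcal{A},[],3)$ by a matrix SVD/QR, use the block-diagonalization \eqref{operator} together with Lemma~\ref{relation} and the fact that tensor transposition becomes slice-wise conjugate transposition in the Fourier domain, and invoke conjugate symmetry of the Fourier slices of a real tensor to ensure the recovered factors are real. You correctly identified the realness of the factors as the only delicate point and handled it properly, so nothing is missing.
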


\hskip 2em
Analogous to the normalization operations in the matrix cases, Algorithm 1 takes a nonzero tensor $\vec{\mathcal{X}}\in\mathbb{R}^{n\times 1\times n_3}$ and returns a normalized tensor $\vec{\mathcal{V}}\in\mathbb{R}^{n\times 1\times n_3}$ and a tubal scalar $\mathbf{a}\in\mathbb{R}^{1\times 1\times n_3}$ such that
\begin{equation*}
	\vec{\mathcal{X}}=\vec{\mathcal{V}}*\mathbf{a}\ \mathrm{and}\ \|\vec{\mathcal{V}}\|=1.
\end{equation*}
\begin{algorithm}[htb]
	\caption{Normalize \cite{Kilmer2013}}
	\hspace*{0.02in} {\bf Input:}
	$\vec{\mathcal{X}}\in {\mathbb{R}}^{ n\times 1\times n_3}\neq \mathbf{0}$\\
	\hspace*{0.02in} {\bf Output:}
    $\vec{\mathcal{V}},\mathbf{a}$ with $\vec{\mathcal{X}}=\vec{\mathcal{V}}*\mathbf{a}$ and $\|\vec{\mathcal{V}}\|=1$
	\begin{algorithmic}[1]
		\State $\vec{\mathcal{V}}\ \gets\ \mathtt{fft}(\vec{\mathcal{X}},[],3)$
		\For{$j=1,2,\ldots,n_3$}
		\State $\mathbf{a}^{(j)}$ $\gets$ $\|\vec{\mathcal{V}}^{(j)}\|_2$
		\If{$\mathbf{a}^{(j)}\ge\ \mathtt{tol}$}
		\State $\vec{\mathcal{V}}^{(j)}$ $\gets$ $\frac{1}{\mathbf{a}^{(j)}}\vec{\mathcal{V}}^{(j)}$
		\Else
		\State $\vec{\mathcal{V}}^{(j)}$ $\gets$ $\mathtt{randn}(n,1)$;
		$\mathbf{a}^{(j)}$ $\gets$ $\|\vec{\mathcal{V}}^{(j)}\|_2$;
		$\vec{\mathcal{V}}^{(j)}$ $\gets$ $\frac{1}{\mathbf{a}^{(j)}}\vec{\mathcal{V}}^{(j)}$;
		$\mathbf{a}^{(j)}$ $\gets$ $\mathbf{0}$
		\EndIf
		\EndFor
		\State $\vec{\mathcal{V}}$ $\gets$ $\mathtt{ifft}(\vec{\mathcal{V}},[],3)$;
		$\mathbf{a}^{(j)}$ $\gets$ $\mathtt{ifft}(\mathbf{a},[],3)$
	\end{algorithmic}
\end{algorithm}
\begin{algorithm}[htb]
	\caption{tensor Golub-Kahan bidiagonalization \cite{Kilmer2013}}
	\hspace*{0.02in} {\bf Input:}
	$\mathcal{A}\in {\mathbb{R}}^{ n_1\times n_2\times n_3}$,
	$\vec{\mathcal{B}}\in {\mathbb{R}}^{ n_1\times 1\times n_3}$ such that $\mathcal{A}^{\mathrm{T}}*\vec{\mathcal{B}}\neq\mathbf{0}$
	\begin{algorithmic}[1]
		\State $\vec{\mathcal{W}}_0$ $\gets$ $\mathbf{0}$
		\State $[\vec{\mathcal{Q}}_1,\mathbf{z_1}]\gets\mathtt{Normalize}(\vec{\mathcal{B}})$ with $\mathbf{z_1}$ invertible
		\For{$i=1,2,\cdots,k$}
		\State
		$\begin{cases}
			\vec{\mathcal{W}}_i \gets \mathcal{A}^{\mathrm{T}}*\vec{\mathcal{Q}_i}-\vec{\mathcal{W}}_{i-1}*\mathbf{z}_i\ (\mathrm{no\ reorthogonalization})\\
			\\
			\vec{\mathcal{W}}_i$ $\gets$ $\mathcal{A}^{\mathrm{T}}*\vec{\mathcal{Q}_i}-\vec{\mathcal{W}}_{i-1}*\mathbf{z}_i,
			\vec{\mathcal{W}}_i \gets
			\vec{\mathcal{W}}_i-\sum\limits_{j=1}^{i-1}\vec{\mathcal{W}}_j*(\vec{\mathcal{W}}_j^{\mathrm{T}}*\vec{\mathcal{W}}_i)\\ (\mathrm{with\ reorthogoonalization}) 				
		\end{cases}$
		\State $[\vec{\mathcal{W}}_i,\mathbf{c}_i]$ $\gets$ $\mathtt{Normalize}(\vec{\mathcal{W}}_i)$
		\State
		$\begin{cases}
			\vec{\mathcal{Q}}_{i+1}\gets \mathcal{A}*\vec{\mathcal{W}}_i-\vec{\mathcal{Q}}_i*\mathbf{c}_i\ (\mathrm{no  \ reorthogonalization})\\
			\\
			\vec{\mathcal{Q}}_{i+1} \gets \mathcal{A}*\vec{\mathcal{W}}_i-\vec{\mathcal{Q}}_i*\mathbf{c}_i,
			\vec{\mathcal{Q}}_{i+1}\gets \vec{\mathcal{Q}}_{i+1}-\sum\limits_{j=1}^{i}\vec{\mathcal{Q}}_j*(\vec{\mathcal{Q}}_j^{\mathrm{T}}*\vec{\mathcal{Q}}_{i+1})\\ (\mathrm{with\ reorthgonalization})
		\end{cases}$
		\State $[\vec{\mathcal{Q}}_{i+1},\mathbf{z}_{i+1}]$ $\gets$ $\mathtt{Normalize}(\vec{\mathcal{Q}}_{i+1})$
		\EndFor
	\end{algorithmic}
\end{algorithm}

\hskip 2em
The tensor Golub-Kahan bidiagonalization (t-GKB) algorithm (Algorithm 2), introduced in \cite{Kilmer2013}, produces the t-GKB decomposition
\begin{equation*}
	\mathcal{A}*\mathcal{W}_k=\mathcal{Q}_{k+1}*\bar{\mathcal{P}}_k,\ \mathcal{A}^{\mathrm{T}}*\mathcal{Q}_k=\mathcal{W}_k*\mathcal{P}_k^{\mathrm{T}},
\end{equation*}
where
\begin{equation}
	\bar{\mathcal{P}}_{k}=\left[\begin{array}{ccccc}
		\mathbf{c}_{1} & & & & \\
		\mathbf{z}_{2} & \mathbf{c}_{2} & & & \\
		& \mathbf{z}_{3} & \mathbf{c}_{3} & & \\
		& & \ddots & \ddots & \\
		& & & \mathbf{z}_{k} & \mathbf{c}_{k} \\
		& & & & \mathbf{z}_{k+1}
	\end{array}\right] \in \mathbb{R}^{(k+1) \times k \times n_3},
\end{equation}
and tensor $\mathcal{P}_k$ represents the leading $k\times k \times n_3$ subtensor of $\bar{\mathcal{P}}_k$.

\hskip 2em
\section{Incremental tensor regularized least squares }
\hskip 2em  
In this section, we first recall 
the t-GKT algorithm for solving the t-RLS problem. With the addition of a new horizontal sample, the cost of  conventional algorithms increases accordingly. Thus attention is paid to establish an efficient algorithm that incrementally computes the solution to the t-RLS problem with multiple lateral slices on the right-hand side.
\subsection{Tensor regularized least squares}

\hskip 2em It is shown in \cite{Kilmer2013, Lothar2021,ReichelGolubKahanTikhonov2021} that the t-RLS problem (\ref{tensor Tikhonov regularization}) has a unique solution ${\mathcal{X}^*}$, 
\begin{equation}\label{normal equation c=1}
	{\mathcal{X}^*}=\left(\mathcal{A}^{\mathrm{T}}*\mathcal{A}+\lambda^2\mathcal{I}_n\right)^{-1}*\mathcal{A}^{\mathrm{T}}*{\mathcal{B}}
	=\mathcal{A}^{\mathrm{T}}*\left(\mathcal{A}*\mathcal{A}^{\mathrm{T}}+\lambda^2\mathcal{I}_m\right)^{-1}*\mathcal{B}.
\end{equation}
When it comes to designing a high-performance tensor algorithm, it is not enough simply to compute ${\mathcal{X}^*}$ by (\ref{normal equation c=1}). 

\hskip 2em
  Recalling the t-GKB algorithm, if the number of steps $k$ of the process is small enough to avoid breakdown, that is, $k$ is chosen small enough so that the tubal scalars $\mathbf{c}_i$ and $\mathbf{z}_{i+1}$ for $i=1,2,\cdots,k$, determined by Algorithm
  2, are invertible, then $\mathcal{A}$ can be reduced to a small bidiagonal tensor. Thus, by utilizing the t-GKB algorithm, the t-GKT method (Algorithm 4), discussed in \cite{ReichelGolubKahanTikhonov2021}, simplifies (\ref{tensor Tikhonov regularization}) to a t-LS problem with lower dimensions, which can be easily computed by Algorithm 3.
  \begin{algorithm}[htb]
  	\caption{Solution of a generic tensor least squares problem \cite{ReichelGolubKahanTikhonov2021}}
  	\hspace*{0.02in} {\bf Input:}
  	$\mathcal{C}\in {\mathbb{R}}^{ n_1\times n_2\times n_3}$, where its Fourier transform has nonsingular frontal slices;
  	$\vec{\mathcal{D}}\in {\mathbb{R}}^{ n_1\times 1\times n_3}$, $\vec{\mathcal{D}}\neq \mathbf{0}$
  	\\
  	\hspace*{0.02in} {\bf Output:}
  	The solution $\vec{\mathcal{Y}}\in {\mathbb{R}}^{ n_2\times 1\times n_3}$ of $\mathrm{min}_{\vec{\mathcal{Y}}\in {\mathbb{R}}^{ n_2\times 1\times n_3}}\|\mathcal{C}*\vec{\mathcal{Y}}-\vec{\mathcal{D}}\|_F$
  	\begin{algorithmic}[1]
  		\State $\mathcal{C}\gets\mathtt{fft}(\mathcal{C},[],3)$
  		\State $\vec{\mathcal{D}}\gets\mathtt{fft}(\vec{\mathcal{D}},[],3)$
  		\For{$i=1,2,\cdots,n_3$}
  		\State $\vec{\mathcal{Y}}(:,:,i)=\mathcal{C}(:,:,i)\backslash\vec{\mathcal{D}}(:,:,i)$, where $\backslash$ denotes MATLAB's backslash operator
  		\EndFor
  		\State $\vec{\mathcal{Y}}\gets\mathtt{ifft}(\vec{\mathcal{Y}},[],3)$
  	\end{algorithmic}
  \end{algorithm}
\begin{algorithm}[htb]
	\caption{t-GKT method for solving the t-RLS problem}
	\hspace*{0.02in} {\bf Input:}
	$\mathcal{A}\in {\mathbb{R}}^{ m\times n\times p}$,
	${\mathcal{B}}\in {\mathbb{R}}^{ m\times c\times p}$, $k\ge 1$
	\\
	\hspace*{0.02in} {\bf Output:}
	The solution ${\mathcal{X}}\in {\mathbb{R}}^{ n\times c\times p}$ of the t-RLS problem 
\begin{algorithmic}[1]
	\For{$j=1,2,\cdots,p$}
	\State $[\vec{\mathcal{Q}}_1,\mathbf{z_1}]\gets \mathrm{Normalize}(\vec{\mathcal{B}}_j)$
	\State Compute $\mathcal{W}_k,\mathcal{Q}_{k+1}$ and $\bar{\mathcal{P}}_k$ by Algorithm 2
	\State Compute the t-QR component f-upper triangular tensor $\mathcal{R}_{k}$ of $\mathcal{W}_{k}$
	\State Compute $\tilde{\mathcal{P}}_k\gets\bar{\mathcal{P}}_k*\mathcal{\mathcal{R}}_k^{-1}$
	\State Compute the solution $\vec{\mathcal{Z}}_j$ of the t-LS problem
	\begin{equation*}
		\mathrm{min}\left\{\left\|\begin{bmatrix}
			\tilde{\mathcal{P}}_k\\\lambda\mathcal{I}
		\end{bmatrix}*\vec{\mathcal{Z}}-\begin{bmatrix}
		\vec{e}_1*\mathbf{z}_1\\ \mathbf{0}
	\end{bmatrix}\right\|_F:\ \vec{\mathcal{Z}}\in{\mathbb{R}}^{ k\times 1\times p}\right\}
	\end{equation*}
	by Algorithm 3, where $\vec{e}_1$ is such that the $(1,1,1)$th entry of $\vec{e}_1$ equals $1$ and the remaining entries vanish
	\State Compute $\vec{\mathcal{X}}_j\gets\mathcal{W}_k*\mathcal{R}_k^{-1}*\vec{\mathcal{Z}}_j$
	\EndFor
\end{algorithmic}
\end{algorithm}
\hskip 2em However, when a new sample $\mathcal{A}_1\in\mathbb{R}^{n\times 1\times p}$ and the corresponding response ${\mathcal{B}_1}\in\mathbb{R}^{c\times 1\times p}$ are added, a new problem arises as shown below:
\begin{equation}\label{Tensor regularization new}
	\mathop{\min}\limits_{\mathcal{X}\in\mathbb{R}^{n\times c\times p}}
	\left\{ {\left\|\tilde{\mathcal{A}}*{\mathcal{X}}-{\tilde{\mathcal{B}}}\right\|}^2_F + {\lambda^2}{\left\|{\mathcal{X}}\right\|}^2_F\right\},
\end{equation}
where $\mathcal{\tilde{A}}=\begin{bmatrix}\mathcal{A}\\{\mathcal{A}^{\mathrm{T}}_1}\end{bmatrix}\in\mathbb{R}^{(m+1)\times n\times p},\mathcal{\tilde{B}}=\begin{bmatrix}\mathcal{B}\\{\mathcal{B}^{\mathrm{T}}_1}\end{bmatrix}\in \mathbb{R}^{(m+1)\times c \times p}$.
In such a case, the t-GKT method and other conventional algorithms that compute the result from scratch without making use of the known solution of the original problem (\ref{tensor Tikhonov regularization}) is inefficient. To conquer this problem, an incremental algorithm for solving (\ref{Tensor regularization new}) with lower cost is derived in the subsequent work.

\subsection{Our contribution}

\hskip 2em Suppose that $\mathcal{A}_{\lambda}=\begin{bmatrix}
	\mathcal{A}&\lambda\mathcal{I}_m
\end{bmatrix}$ and $\widehat{\mathcal{X}}=\begin{bmatrix}
\mathcal{X}\\ \mathcal{W}
\end{bmatrix}$, where $\mathcal{X}\in\mathbb{R}^{n\times c\times p}$ and $\mathcal{W}\in\mathbb{R}^{m\times c\times p}$. The following theorem implies that the unique solution $\mathcal{X}^*$ of the t-RLS problem (\ref{tensor Tikhonov regularization})
is closely related to the minimum Frobenius norm solution of the following t-LS problem:
\begin{equation}\label{relation theorem tLS}
	\mathrm{min}\left\{\left\|\mathcal{A}_{\lambda}*\widehat{\mathcal{X}}-\mathcal{B}\right\|_F^2:\ \widehat{\mathcal{X}}\in\mathbb{R}^{(m+n)\times c\times p}\right\}.
\end{equation}
\begin{theorem}\label{A_mu}
	Let $\widehat{\mathcal{X}}^*=\begin{bmatrix}
		\mathcal{X}^*\\ \mathcal{W}^*
	\end{bmatrix}$ be the minimum Frobenius norm solution to the t-LS problem (\ref{relation theorem tLS}), then $\mathcal{X}^*$ is the unique solution to the t-RLS problem (\ref{tensor Tikhonov regularization}).
\end{theorem}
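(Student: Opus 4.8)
The plan is to pass to the Fourier domain, where both problems decouple across frontal slices into classical matrix problems, solve them there, and then match the two solutions. First I would apply $\mathtt{fft}(\cdot,[],3)$ along the third mode. Since the unnormalized DFT scales squared Euclidean norms by the factor $p$, we have $\|\mathcal{Y}\|_F^2=\frac{1}{p}\|\bar Y\|_F^2$ for every third-order tensor $\mathcal{Y}$, so minimizing $\|\mathcal{A}_{\lambda}*\widehat{\mathcal{X}}-\mathcal{B}\|_F$ is equivalent to minimizing $\|\bar A_{\lambda}\,\bar{\widehat X}-\bar B\|_F$; by Lemma \ref{relation} and the block-diagonal form $\bar A_{\lambda}=\mathrm{diag}(\bar A_{\lambda}^{(1)},\dots,\bar A_{\lambda}^{(p)})$ with $\bar A_{\lambda}^{(i)}=[\,\bar{\mathcal{A}}^{(i)}\ \ \lambda I_m\,]$, this separates into the $p$ independent matrix least squares problems $\min_{\widehat X^{(i)}}\|\,[\,\bar{\mathcal{A}}^{(i)}\ \ \lambda I_m\,]\widehat X^{(i)}-\bar B^{(i)}\|_F$, $i=1,\dots,p$, and the minimum-Frobenius-norm solution of (\ref{relation theorem tLS}) is obtained slicewise as the minimum-norm solution of each of these.

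\textbf{Second step.} Because $\lambda>0$, the block $\lambda I_m$ forces each $M_i:=[\,\bar{\mathcal{A}}^{(i)}\ \ \lambda I_m\,]\in\mathbb{C}^{m\times(n+m)}$ to have full row rank, so each matrix problem is consistent with zero residual and its unique minimum-norm solution is $\widehat X^{(i)}=M_i^{\mathrm H}(M_iM_i^{\mathrm H})^{-1}\bar B^{(i)}$. Since $M_iM_i^{\mathrm H}=\bar{\mathcal{A}}^{(i)}(\bar{\mathcal{A}}^{(i)})^{\mathrm H}+\lambda^2 I_m$ is Hermitian positive definite, reading off the top $n\times c$ block of $\widehat X^{(i)}$ yields $(\bar{\mathcal{A}}^{(i)})^{\mathrm H}\big(\bar{\mathcal{A}}^{(i)}(\bar{\mathcal{A}}^{(i)})^{\mathrm H}+\lambda^2 I_m\big)^{-1}\bar B^{(i)}$. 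On the other hand, transforming the second expression in (\ref{normal equation c=1}) by the same device — using that the tensor transpose corresponds slicewise to the conjugate transpose in the Fourier domain — shows that this is precisely the $i$th frontal slice of $\overline{\mathcal{X}^*}$. Hence the top block of $\widehat{\mathcal{X}}^*$ and $\mathcal{X}^*$ have the same Fourier transform; applying $\mathtt{ifft}(\cdot,[],3)$ identifies them, and uniqueness is the one recalled just before the statement.

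\textbf{A Fourier-free variant.} Equivalently, one may argue entirely on the tensor level: the minimum-Frobenius-norm least squares solution of (\ref{relation theorem tLS}) is $\widehat{\mathcal{X}}^*=\mathcal{A}_{\lambda}^{\dagger}*\mathcal{B}$, and since all Fourier frontal slices of $\mathcal{A}_{\lambda}$ have full row rank, $\mathcal{A}_{\lambda}^{\dagger}=\mathcal{A}_{\lambda}^{\mathrm T}*(\mathcal{A}_{\lambda}*\mathcal{A}_{\lambda}^{\mathrm T})^{-1}$ with $\mathcal{A}_{\lambda}*\mathcal{A}_{\lambda}^{\mathrm T}=\mathcal{A}*\mathcal{A}^{\mathrm T}+\lambda^2\mathcal{I}_m$ invertible. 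Since $\mathcal{A}_{\lambda}^{\mathrm T}$ is the vertical stacking of $\mathcal{A}^{\mathrm T}$ over $\lambda\mathcal{I}_m$, the top $n\times c\times p$ block of $\widehat{\mathcal{X}}^*$ equals $\mathcal{A}^{\mathrm T}*(\mathcal{A}*\mathcal{A}^{\mathrm T}+\lambda^2\mathcal{I}_m)^{-1}*\mathcal{B}$, which is $\mathcal{X}^*$ by (\ref{normal equation c=1}).

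\textbf{Main obstacle.} The computations above are routine; the part that needs care is justifying the two structural facts used: that minimizing a Frobenius norm is, up to the constant $p$, unchanged by $\mathtt{fft}(\cdot,[],3)$, so that the t-LS problem genuinely decouples across Fourier frontal slices (a direct consequence of Lemma \ref{relation} and Parseval's identity, but worth stating explicitly), and that the minimum-Frobenius-norm least squares solution is delivered by the tensor Moore-Penrose inverse, together with the closed form $\mathcal{A}_{\lambda}^{\dagger}=\mathcal{A}_{\lambda}^{\mathrm T}*(\mathcal{A}_{\lambda}*\mathcal{A}_{\lambda}^{\mathrm T})^{-1}$ valid when the Fourier slices of $\mathcal{A}_{\lambda}$ have full row rank — this requires either a citation or a short slicewise verification of the four Moore-Penrose identities. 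Once these are in hand, the identification of the top block of $\widehat{\mathcal{X}}^*$ with $\mathcal{X}^*$ follows immediately from (\ref{normal equation c=1}).
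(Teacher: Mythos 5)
Your proposal is correct and, especially in the ``Fourier-free variant,'' follows essentially the same route as the paper: the paper also writes $\widehat{\mathcal{X}}^{*}=\mathcal{A}_{\lambda}^{\dagger}*\mathcal{B}=\mathcal{A}_{\lambda}^{\mathrm{T}}*(\mathcal{A}_{\lambda}*\mathcal{A}_{\lambda}^{\mathrm{T}})^{\dagger}*\mathcal{B}$ (citing Jin's results for the Moore--Penrose characterization of the minimum-norm solution), uses the full row rank of the Fourier slices $[\,\bar{\mathcal{A}}^{(i)}\ \lambda I_m\,]$ to replace the pseudoinverse by $(\mathcal{A}_{\lambda}*\mathcal{A}_{\lambda}^{\mathrm{T}})^{-1}$, and reads off the top block to recover $\mathcal{X}^{*}$ via (\ref{normal equation c=1}). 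Your explicit slicewise Fourier computation is just this same argument unpacked, so no further comparison is needed.
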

\begin{proof}
	From \cite[Theorem 3.2, Theorem 4.5]{Jin2017}, we obtain that
	\begin{equation*}
	\widehat{\mathcal{X}}^*=\mathcal{A}^{\dagger}_{\lambda}*\mathcal{B}=\mathcal{A}_{\lambda}^{\mathrm{T}}*\left(\mathcal{A}_{\lambda}*\mathcal{A}_{\lambda}^{\mathrm{T}}\right)^{\dagger}*\mathcal{B}.
	\end{equation*}
 Applying operator (\ref{operator}) to $\mathcal{A}_{\lambda}$, we know that the matrices
\begin{equation*}
	\bar{\mathcal{A}}_{\lambda}^{(i)}=\begin{bmatrix}
		\bar{\mathcal{A}}^{(i)}&\lambda I_m
	\end{bmatrix}
\end{equation*}
are of full row rank. Then it follows that each $\bar{\mathcal{A}}^{(i)}_{\lambda}(\bar{\mathcal{A}}^{(i)}_{\lambda})^{\mathrm{H}}$ is invertible, which implies that tensor $(\mathcal{A}_{\lambda}*\mathcal{A}_{\lambda}^{\mathrm{T}})$ is invertible, where Lemma \ref{relation} and the definition of tensor inverse are applied.
Therefore, we have
\begin{equation*}
	\widehat{\mathcal{X}}^*=\mathcal{A}_{\lambda}^{\dagger}*\mathcal{B}=\mathcal{A}
_{\lambda}^{\mathrm{T}}*(\mathcal{A}_{\lambda}*\mathcal{A}_{\lambda}^{\mathrm{T}})^{-1}*\mathcal{B}=\begin{bmatrix}
	\mathcal{A}^{\mathrm{T}}\\\lambda\mathcal{I}_m
\end{bmatrix}*(\mathcal{A}*\mathcal{A}^{\mathrm{T}}+\lambda^2\mathcal{I}_n)^{-1}*\mathcal{B}.
\end{equation*}
Then it leads to $\mathcal{X}^*=\mathcal{A}^{\mathrm{T}}*(\mathcal{A}*\mathcal{A}^{\mathrm{T}}+\lambda^2\mathcal{I}_m)^{-1}*\mathcal{B}$, which is the unique solution of (\ref{tensor Tikhonov regularization}) according to (\ref{normal equation c=1}).
\end{proof}
We next consider exploiting the t-QR factorization of $\mathcal{A}_{\lambda}^{\mathrm{T}}$ to represent the solution of t-RLS problem (\ref{tensor Tikhonov regularization}).
\begin{lemma}\label{lemma t-QR　of A_lambda}
	Let
	\begin{equation}\label{t-QR of A_lambda}
		\mathcal{A}_{\lambda}^{\mathrm{T}}=\mathcal{Q}*\mathcal{R}=\begin{bmatrix}
			\mathcal{Q}_1\\\mathcal{Q}_2
		\end{bmatrix}*\mathcal{R}
	\end{equation}
be the t-QR factorization of $\mathcal{A}_{\lambda}^{\mathrm{T}}$, where $\mathcal{Q}\in \mathbb{R}^{(m+n)\times m\times p},\mathcal{Q}_1\in \mathbb{R}^{n\times m\times p},\mathcal{Q}_2\in \mathbb{R}^{m\times m\times p}$ and $\mathcal{R}\in \mathbb{R}^{m\times m\times p}$ is an f-diagonal tensor. Then the unique solution $\mathcal{X}^*$ of the t-RLS problem (\ref{tensor Tikhonov regularization}) can be expressed as
\begin{equation*}
\mathcal{X}^*=\mathcal{Q}_1*\mathcal{R}^{-\mathrm{T}}*\mathcal{B}.
\end{equation*}
\end{lemma}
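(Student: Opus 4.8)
The plan is to reduce the statement to a short algebraic manipulation of the closed form already available. By the second equality in \eqref{normal equation c=1} we have $\mathcal{X}^{*}=\mathcal{A}^{\mathrm{T}}*(\mathcal{A}*\mathcal{A}^{\mathrm{T}}+\lambda^{2}\mathcal{I}_{m})^{-1}*\mathcal{B}$, so all that is needed is to re-express the two factors $\mathcal{A}^{\mathrm{T}}$ and $(\mathcal{A}*\mathcal{A}^{\mathrm{T}}+\lambda^{2}\mathcal{I}_{m})^{-1}$ through the t-QR data of $\mathcal{A}_{\lambda}^{\mathrm{T}}$. First I would record the block identity
\begin{equation*}
\mathcal{A}*\mathcal{A}^{\mathrm{T}}+\lambda^{2}\mathcal{I}_{m}=\begin{bmatrix}\mathcal{A}&\lambda\mathcal{I}_{m}\end{bmatrix}*\begin{bmatrix}\mathcal{A}^{\mathrm{T}}\\\lambda\mathcal{I}_{m}\end{bmatrix}=\mathcal{A}_{\lambda}*\mathcal{A}_{\lambda}^{\mathrm{T}},
\end{equation*}
which follows at once from the definition of the t-product on block tensors.

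Next I would substitute the factorization \eqref{t-QR of A_lambda}. From $\mathcal{A}_{\lambda}^{\mathrm{T}}=\mathcal{Q}*\mathcal{R}$ we get $\mathcal{A}_{\lambda}=\mathcal{R}^{\mathrm{T}}*\mathcal{Q}^{\mathrm{T}}$, and since the thin factor $\mathcal{Q}\in\mathbb{R}^{(m+n)\times m\times p}$ has t-orthonormal columns, $\mathcal{Q}^{\mathrm{T}}*\mathcal{Q}=\mathcal{I}_{m}$, hence $\mathcal{A}_{\lambda}*\mathcal{A}_{\lambda}^{\mathrm{T}}=\mathcal{R}^{\mathrm{T}}*\mathcal{Q}^{\mathrm{T}}*\mathcal{Q}*\mathcal{R}=\mathcal{R}^{\mathrm{T}}*\mathcal{R}$. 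Comparing the first block of $n$ horizontal slices on the two sides of $\begin{bmatrix}\mathcal{A}^{\mathrm{T}}\\\lambda\mathcal{I}_{m}\end{bmatrix}=\begin{bmatrix}\mathcal{Q}_{1}\\\mathcal{Q}_{2}\end{bmatrix}*\mathcal{R}$ gives $\mathcal{A}^{\mathrm{T}}=\mathcal{Q}_{1}*\mathcal{R}$. One then needs $\mathcal{R}$ to be invertible: this is exactly the Fourier-domain fact already used in the proof of Theorem~\ref{A_mu}, namely that each $\bar{\mathcal{A}}_{\lambda}^{(i)}$ has full row rank, so $\bar{\mathcal{A}}_{\lambda}^{(i)}(\bar{\mathcal{A}}_{\lambda}^{(i)})^{\mathrm{H}}$ is nonsingular; equivalently $\mathcal{A}_{\lambda}*\mathcal{A}_{\lambda}^{\mathrm{T}}=\mathcal{R}^{\mathrm{T}}*\mathcal{R}$ is invertible, forcing $\mathcal{R}$ to be invertible.

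Finally I would assemble the pieces, using Lemma~\ref{relation} together with $(\mathcal{R}^{\mathrm{T}}*\mathcal{R})^{-1}=\mathcal{R}^{-1}*\mathcal{R}^{-\mathrm{T}}$, where $\mathcal{R}^{-\mathrm{T}}$ denotes $(\mathcal{R}^{-1})^{\mathrm{T}}=(\mathcal{R}^{\mathrm{T}})^{-1}$:
\begin{equation*}
\mathcal{X}^{*}=\mathcal{A}^{\mathrm{T}}*(\mathcal{A}_{\lambda}*\mathcal{A}_{\lambda}^{\mathrm{T}})^{-1}*\mathcal{B}=\mathcal{Q}_{1}*\mathcal{R}*(\mathcal{R}^{\mathrm{T}}*\mathcal{R})^{-1}*\mathcal{B}=\mathcal{Q}_{1}*\mathcal{R}*\mathcal{R}^{-1}*\mathcal{R}^{-\mathrm{T}}*\mathcal{B}=\mathcal{Q}_{1}*\mathcal{R}^{-\mathrm{T}}*\mathcal{B}.
\end{equation*}
I expect the only delicate points to be bookkeeping rather than conceptual: justifying $\mathcal{Q}^{\mathrm{T}}*\mathcal{Q}=\mathcal{I}_{m}$ for the reduced t-QR, and the fact that inversion and transposition commute and distribute over the t-product exactly as in the matrix case. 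Both reduce, via the block diagonalization \eqref{operator}, to the corresponding elementary statements for each frontal slice in the Fourier domain, so no new tools are required.
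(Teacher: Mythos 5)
Your proof is correct and follows essentially the same route as the paper's: both hinge on the t-QR factorization of $\mathcal{A}_{\lambda}^{\mathrm{T}}$ and the identity $\mathcal{A}_{\lambda}*\mathcal{A}_{\lambda}^{\mathrm{T}}=\mathcal{R}^{\mathrm{T}}*\mathcal{R}$, the paper substituting these into $\widehat{\mathcal{X}}^{*}=\mathcal{A}_{\lambda}^{\mathrm{T}}*(\mathcal{A}_{\lambda}*\mathcal{A}_{\lambda}^{\mathrm{T}})^{-1}*\mathcal{B}$ from the proof of Theorem \ref{A_mu} and reading off the top block, while you substitute into (\ref{normal equation c=1}) after first extracting the block relation $\mathcal{A}^{\mathrm{T}}=\mathcal{Q}_{1}*\mathcal{R}$. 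The details you add (invertibility of $\mathcal{R}$, and $\mathcal{Q}^{\mathrm{T}}*\mathcal{Q}=\mathcal{I}_{m}$ for the economy-size factorization, both checked slice-wise in the Fourier domain) are precisely what the paper dismisses as ``straightforward.''
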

\begin{proof}
From the proof of Theorem \ref{A_mu}, it is straightforward to show that

\begin{equation*}
	\widehat{\mathcal{X}}^*=\mathcal{A}_{\lambda}^{\mathrm{T}}*(\mathcal{A}_{\lambda}*\mathcal{A}_{\lambda}^{\mathrm{T}})^{-1}*\mathcal{B}=\mathcal{Q}*\mathcal{R}^{-\mathrm{T}}*\mathcal{B}=\begin{bmatrix}
		\mathcal{Q}_1\\\mathcal{Q}_2
	\end{bmatrix}*\mathcal{R}^{-\mathrm{T}}*\mathcal{B},
\end{equation*}
and the proof is complete.
\end{proof}
This formula serves as the theoretical base for the derivation of our incremental algorithm.

\hskip 2em Let $\mathcal{\tilde{A}}_{\lambda}=\begin{bmatrix}
	\mathcal{A}&\lambda\mathcal{I}_m&\mathbf{0}\\\mathcal{A}_1^{\mathrm{T}}&\mathbf{0}&\lambda\mathbf{e}_1
\end{bmatrix}$, $\tilde{\mathcal{B}}=\begin{bmatrix}
\mathcal{B}\\\mathcal{B}_1^{\mathrm{T}}
\end{bmatrix}$. Then we have
	\begin{equation}
	\mathcal{\tilde{A}}_{\lambda}^{\mathrm{T}}=\begin{bmatrix}
	\mathcal{A}^{\mathrm{T}}&\mathcal{A}_1\\\lambda\mathcal{I}_m&\mathbf{0}\\\mathbf{0}&\lambda\mathbf{e_1}
\end{bmatrix}=\begin{bmatrix}
	\mathcal{A}^{\mathrm{T}}_{\lambda}&\mathcal{\widehat{A}}_1\\\mathbf{0}&\lambda\mathbf{e_1}
\end{bmatrix},
	\end{equation}
where $\mathcal{\widehat{A}}_1=\begin{bmatrix}
	\mathcal{A}_1\\\mathbf{0}
\end{bmatrix}\in {\mathbb{R}}^{ (m+n)\times 1\times p}$. 
Let
\begin{equation*}
 \mathcal{H}=\mathcal{Q}^{\mathrm{T}}*\widehat{\mathcal{A}}_1=\mathcal{Q}_1^{\mathrm{T}}*\mathcal{A}_1,\ \mathcal{C}=(\mathcal{I}-\mathcal{Q}*\mathcal{Q}^{\mathrm{T}})*\mathcal{\widehat{A}}_1*\frac{1}{\mu}\mathbf{e}_1=\begin{bmatrix}
 	\mathcal{C}_1\\\mathcal{C}_2
 \end{bmatrix},
\end{equation*}
where $\mathcal{C}_1 \in \mathbb{R}^{n\times 1 \times p},\mathcal{C}_2 \in \mathbb{R}^{m\times 1 \times p}$ and   $\mu=\sqrt{{\left\|(\mathcal{I}-\mathcal{Q}*\mathcal{Q}^{\mathrm{T}})*\mathcal{\widehat{A}}_1\right\|}_F^2+\lambda^2}>0$.

\hskip 2em Therefore, we have
\begin{align*}
	\mathcal{\tilde{A}}^{\mathrm{T}}_{\lambda} &=
	\begin{bmatrix}
		\mathcal{A}^{\mathrm{T}}_{\lambda}&\mathcal{\widehat{A}}_1\\\mathbf{0}&\lambda\mathbf{e}_1
	\end{bmatrix}\\
	&=
	\begin{bmatrix}
		\mathcal{Q}*\mathcal{R}&\mathcal{Q}*\mathcal{H}+(\mathcal{I}-\mathcal{Q}*\mathcal{Q}^\mathrm{T})*\mathcal{\widehat{A}}_1\\\mathbf{0}&\lambda\mathbf{e_1}
	\end{bmatrix}\\
	&=\begin{bmatrix}
		\mathcal{Q}&\mathcal{C}\\\mathbf{0}&\frac{\lambda}{\mu}\mathbf{e_1}
	\end{bmatrix}
	*\begin{bmatrix}
		\mathcal{R}&\mathcal{H}\\\mathbf{0}&\mu\mathbf{e_1}
	\end{bmatrix}\\
	&=\begin{bmatrix}
		\mathcal{\tilde{Q}}_1\\\mathcal{\tilde{Q}}_2
	\end{bmatrix}
	*\mathcal{\tilde{R}},
\end{align*}
where $\mathcal{\tilde{Q}}_1=\begin{bmatrix}
	\mathcal{Q}_1&\mathcal{C}_1
\end{bmatrix}\in {\mathbb{R}}^{ m\times n\times p}$.
Then it follows that
\begin{align}
	\mathcal{\tilde{X}}^* &= \mathcal{\tilde{Q}}_1*\mathcal{\tilde{R}}^{-\mathrm{T}}*\tilde{\mathcal{B}} \notag\\
	&= \begin{bmatrix}	\mathcal{Q}_1&\mathcal{C}_1
	\end{bmatrix}
	*\begin{bmatrix}
		\mathcal{R}&\mathcal{H}\\\mathbf{0}&\mu\mathbf{e_1}
	\end{bmatrix}^{-\mathrm{T}}
	*\begin{bmatrix}
		\mathcal{B}\\\mathcal{B}_1^{\mathrm{T}}
	\end{bmatrix} \notag \\
	&=\begin{bmatrix}
		\mathcal{Q}_1&\mathcal{C}_1
	\end{bmatrix}
	*\begin{bmatrix}
		\mathcal{R}^{-\mathrm{T}}&\mathbf{0}\\(-\frac{1}{\mu})\mathbf{e_1}*\mathcal{H}^\mathrm{T}*\mathcal{R}^{-\mathrm{T}}&\frac{1}{\mu}\mathbf{e_1}
	\end{bmatrix}
	*\begin{bmatrix}
		\mathcal{B}\\\mathcal{B}_1^{\mathrm{T}}
	\end{bmatrix} \notag \\
	&=\mathcal{Q}_1*\mathcal{R}^{-\mathrm{T}}*\mathcal{B}+\mathcal{C}_1*\frac{1}{\mu}\mathbf{e_1}*(\mathcal{B}_1^{\mathrm{T}}-\mathcal{H}^{\mathrm{T}}*\mathcal{R}^{-\mathrm{T}}*\mathcal{B}) \notag \\
	&=\mathcal{X}^*+\mathcal{C}_1*\frac{1}{\mu}\mathbf{e_1}*(\mathcal{B}^{\mathrm{T}}_1-\mathcal{A}^{\mathrm{T}}_1*\mathcal{X}^*)\label{how to avoid QR}.
\end{align}
If $\tilde{\mathcal{X}}^*$ is computed by (\ref{how to avoid QR})
, we need to perform the t-QR factorization to compute $\mathcal{C}_1*\frac{1}{\mu}\mathbf{e_1}$. However, the cost is prohibitively high for the large-scale high-dimensional data. Hence, an alternative approach is investigated. 

\hskip 2em Denote $\mathcal{W}=\mathcal{B}^{\mathrm{T}}_1-\mathcal{A}^{\mathrm{T}}_1*\mathcal{X}^*$ and choose an index $l$ such that $\mathcal{W}_l=\mathcal{W}(:,l,:)$ is invertible. Thus, we have
\begin{equation*}
	\mathcal{\tilde{X}}^*_l=\mathcal{X}^*_l+\mathcal{C}_1*\frac{1}{\mu}\mathbf{e}_1*\mathcal{W}_l,
\end{equation*}
where $\mathcal{X}^*_l=\mathcal{X}^*(:,l,:)$ and $\mathcal{\tilde{X}}^*_l=\mathcal{\tilde{X}}^*(:,l,:)$ such that
\begin{equation}\label{X_t^*}
	\mathcal{\tilde{X}}^*_l=\mathrm{argmin}\left\{{\left\|\mathcal{\tilde{A}*\mathcal{\vec{X}}-\mathcal{\tilde{B}}}(:,l,:)\right\|}^2_F+\lambda^2{\left\|\mathcal{\vec{X}}\right\|}^2_F: \vec{\mathcal{X}}\in \mathbb{R}^{n\times 1\times p}\right\}.
\end{equation}
It follows that
\begin{equation*}
	\mathcal{C}_1*\frac{1}{\mu}\mathbf{e}_1=\left(\mathcal{\tilde{X}}^*_l-\mathcal{X}^*_l\right)*\mathcal{W}_l^{-1}.
\end{equation*}
Substituting it into (\ref{how to avoid QR}) leads to
\begin{equation}
	\mathcal{\tilde{X}}^*=\mathcal{X}^*+(\mathcal{\tilde{X}}^*_l-\mathcal{X}^*_l)*\mathcal{W}_l^{-1}*(\mathcal{B}^{\mathrm{T}}_1-\mathcal{A}_1^{\mathrm{T}}*\mathcal{X}^*).
\end{equation}

\hskip 2em The dominant work in solving (\ref{Tensor regularization new}) to update $\mathcal{X}^*$ is to solve the t-RLS problem with a single lateral slice on the right-hand side. The t-GKT algorithm can be adopted to solve (\ref{X_t^*}). The above yields the t-IRLS framework for solving (\ref{Tensor regularization new}) which is summarized in Algorithm 5.
\begin{algorithm}[htb]
	\caption{t-IRLS algorithm for one new horizontal slice}
	\hspace*{0.02in} {\bf Input:}
	The solution $\mathcal{X^*}$ of (\ref{tensor Tikhonov regularization}), data tensor $\mathcal{A}\in {\mathbb{R}}^{ m\times n\times p}$, response tensor $\mathcal{B}\in {\mathbb{R}}^{ m\times c\times p}$, the new sample $\mathcal{A}_1$, and the corresponding response $\mathcal{B}_1$\\
	\hspace*{0.02in} {\bf Output:}
	The solution $\tilde{\mathcal{X}}^*$ of (\ref{Tensor regularization new})
	\begin{algorithmic}[1]
		\State Compute $\mathcal{W}\gets\mathcal{B}_1^\mathrm{T}-\mathcal{A}_1^\mathrm{T}*\mathcal{X}^*\in\mathbb{R}^{1\times c\times p}$ and index $l$ s.t. $\mathcal{W}(:,l,:)$ is invertible
		\State Apply Algorithm 4 to compute $\mathcal{\tilde{X}}^*_l$ such that
		\begin{equation*}
			\mathcal{\tilde{X}}^*_l=\mathrm{argmin}\left\{{\left\|\mathcal{\tilde{A}*\mathcal{\vec{X}}-\mathcal{\tilde{B}}}(:,l,:)\right\|}^2_F+\lambda^2{\left\|\mathcal{\vec{X}}\right\|}^2_F:\ \vec{\mathcal{X}}\in\mathbb{R}^{n\times 1\times p}\right\}
		\end{equation*}
		\State Compute $\mathcal{\tilde{X}}^*\gets\mathcal{X}^*+(\mathcal{\tilde{X}}^*_l-\mathcal{X}^*_l)*\mathcal{W}_l^{-1}*(\mathcal{B}^\mathrm{T}_1-\mathcal{A}_1^\mathrm{T}*\mathcal{X}^*)$
	\end{algorithmic}
\end{algorithm}
\section{ Numerical results}
\hskip 2em In this section, numerical experiments are performed to investigate the performance of our t-IRLS algorithm. 
All computations are carried out in MATLAB 2020a on a computer with an AMD Ryzen 5 processor and 16 GB RAM running Windows 10.

\hskip 2em Let $\mathcal{X}_{\mathrm{tGKT}}$ and  $\mathcal{X}_{\mathrm{tIRLS}}$ be the computed solutions of (\ref{Tensor regularization new}) by Algorithms 4 and 5 respectively and $\tilde{\mathcal{X}}^*$ refers to the exact solution of ($\ref{Tensor regularization new}$). The relative error
\begin{equation*}
	\mathrm{Err}=\frac{\|\mathcal{X}_{\mathrm{tGKT}}-\tilde{\mathcal{X}}^*\|_F}{\|\tilde{\mathcal{X}}^*\|_F}  ~\mbox{or}~ \frac{\|\mathcal{X}_{\mathrm{tIRLS}}-\tilde{\mathcal{X}}^*\|_F}{\|\tilde{\mathcal{X}}^*\|_F}
\end{equation*}

and the running time in seconds (denoted as “CPU”) are recorded for comparison. Since the purpose of our examples is to illustrate the performance of the t-IRLS algorithm and choosing the regularization parameter $\lambda$ is an important and well-researched topic which is far beyond
the scope of this paper, then we assume that the value of $\lambda$ is already known.
\hskip 2em
$\mathbf{Example\ 1.}$ 
Initially we generate tensor $\mathcal{A}'\in\mathbb{R}^{m\times m\times m}, \mathcal{B}\in {\mathbb{R}}^{ m\times c\times m}, \mathcal{A}_1\in {\mathbb{R}}^{ m\times 1\times m}$ and $\mathcal{B}_1 \in {\mathbb{R}}^{ c\times 1\times m}$ with entries from the standard normal distribution. Let $\mathcal{A}'=\mathcal{U}*\mathcal{S}'*\mathcal{V}^{\mathrm{T}}\in\mathbb{R}^{m\times m\times m}$ be the t-SVD of tensor $\mathcal{A}'$.
The data tensor is constructed by
\begin{equation*}
	\mathcal{A}=\mathcal{U}*\mathcal{S}*\mathcal{V}^{\mathrm{T}},\ \mathrm{with}\  \mathcal{S}(i,i,:)=\begin{cases}
		\mathcal{S}'(i,i,:), & 1\le i\le m-3\\
		(1\mathrm{e}\mbox{-}2)\mathcal{S}'(i,i,:),& \mathrm{otherwise}
	\end{cases}
\end{equation*}
which is of ill-determined tubal rank \cite{ReichelGolubKahanTikhonov2021}, that is, there are many non-vanishing singular tubes of tiny Frobenius norm whose entries are at scale $1\mathrm{e}\mbox{-}2$. Then the t-GKT algorithm and the t-IRLS algorithm are applied for solving the t-RLS problem (\ref{Tensor regularization new}) with $\lambda=1\mathrm{e}2$ respectively.

\hskip 2em The results with various $c$'s are displayed in Tables 1 and 2. Compared with using the t-GKT to directly solve the t-RLS problem (\ref{Tensor regularization new}) while a new sample is inserted, the t-IRLS algorithm can significantly accelerate the solving process while the accuracy of the solution is still maintained. Moreover, with the value of $c$ increasing, we observe that
the advantage of the t-IRLS algorithm becomes more obvious.

\begin{table}[htb]
	\caption{Results for Example 1 with $m=30$ }
	\centering
	\begin{tabular}{ccccc}
		\toprule
		$c$&Method&$\mathrm{Err}$&$k$&CPU                \\
		\toprule
		\multirow{2}*{$10$} &t-IRLS &$3.3533\mathrm{e}\mbox{-}05$ &-&$0.01619$\\
		~&t-GKT&$2.5377\mathrm{e}\mbox{-}04$&$4$&$0.1274$\\
		\toprule
		\specialrule{0em}{0pt}{4pt}
		\multirow{2}*{$100$} &t-IRLS &$5.6209\mathrm{e}\mbox{-}08$&- &$0.03327$\\
		~&t-GKT&$5.9615\mathrm{e}\mbox{-}09$&$7$&$2.979$\\
		\toprule
		\specialrule{0em}{0pt}{4pt}
		\multirow{2}*{$1000$} &t-IRLS &$2.8114\mathrm{e}\mbox{-}13$&- &$0.1077$\\
		~&t-GKT&$1.0558\mathrm{e}\mbox{-}13$&$11$&$62.58$\\
		\toprule
		\specialrule{0em}{0pt}{4pt}
		\multirow{2}*{$10000$} &t-IRLS &$2.4405\mathrm{e}\mbox{-}12$ &-&$0.5624$\\
		~&t-GKT&$1.3011\mathrm{e}\mbox{-}12$&$10$&$763.1$\\
		\toprule
	\end{tabular}
\end{table}

\begin{table}[htb]
	\caption{Results for Example 1 with $m=100$ }
	\centering
	\begin{tabular}{ccccc}
		\toprule
		$c$&Method&$\mathrm{Err}$&$k$&CPU                \\
		\toprule
		\multirow{2}*{$10$} &t-IRLS &$4.2277\mathrm{e}\mbox{-}09$ &-&$0.9550$\\
		~&t-GKT&$1.5866\mathrm{e}\mbox{-}09$&$20$&$9.446$\\
		\toprule
		\specialrule{0em}{0pt}{4pt}
		\multirow{2}*{$50$} &t-IRLS &$2.1504\mathrm{e}\mbox{-}11$&- &$1.669$\\
		~&t-GKT&$2.4890\mathrm{e}\mbox{-}11$&$25$&$70.01$\\
		\toprule
		\specialrule{0em}{0pt}{4pt}
		\multirow{2}*{$100$} &t-IRLS &$3.9632\mathrm{e}\mbox{-}09$&- &$1.002$\\
		~&t-GKT&$1.6088\mathrm{e}\mbox{-}09$&$20$&$95.16$\\
		\toprule
		\specialrule{0em}{0pt}{4pt}
		\multirow{2}*{$500$} &t-IRLS &$5.5120\mathrm{e}\mbox{-}11$ &-&$1.646$\\
		~&t-GKT&$4.6454\mathrm{e}\mbox{-}11$&$25$&$701.9$\\
		\toprule
	\end{tabular}
\end{table}

\hskip 2em
$\mathbf{Example\ 2}.$ \cite{ReichelGolubKahanTikhonov2021}
We use the MATLAB function $\mathtt{baart}$ from Hansen's Regularization Tools \cite{hansen2007regularization}
to generate the matrix $A_1=\mathtt{baart}(m)$ and define $A_2=\mathrm{gallery}('\mathtt{prolate}',m,\alpha)$ with $\alpha=0.46$. Then $A_2$ is a symmetric positive definite ill-conditioned Toeplitz matrix. Let
\begin{equation*}
	\mathcal{A}^{(i)}=A_1(i,1)A_2, ~i=1,2,\ldots,m,
\end{equation*}
then we obtain the tensor $\mathcal{A}\in\mathbb{R}^{m\times m\times m}$. The exact tensor $\mathcal{B}_{\mathrm{true}}$ is generated by $\mathcal{B}_{\mathrm{true}}=\mathcal{A}*\mathcal{X}_{\mathrm{true}}$, where $\mathcal{X}_{\mathrm{true}}\in\mathbb{R}^{m\times c\times m}$ has all entries equal to unity. The noise tensor $\mathcal{E}$ that simulates the error in the data tensor $\mathcal{B}=\mathcal{B}_{\mathrm{true}}+\mathcal{E}$ is given by
\begin{equation*}
\vec{\mathcal{E}}:=\tilde{\delta}\frac{\vec{\mathcal{E}}_{0,j}}{\|\vec{\mathcal{E}}_{0,j}\|_F}\|\vec{\mathcal{B}}_{\mathrm{true},j}\|_F,	
\end{equation*}
where the entries of $\vec{\mathcal{E}}_{0,j}\in\mathbb{R}^{m\times c\times m}$ are normally distributed with zero mean and are scaled to correspond to the specific noise level $\tilde{\delta}=1\mathrm{e}\mbox{-}3$. We next formulate the new sample $\mathcal{A}_1\in\mathbb{R}^{m\times 1\times m}$ and the corresponding data $\mathcal{B}_1\in\mathbb{R}^{c\times1\times m}$ with entries obeying the standard normal distribution. Then we seek the solution of (\ref{Tensor regularization new}) with $\lambda=1/\sqrt{3.91\mathrm{e}\mbox{-}2}$ by the algorithms t-GKT and t-IRLS correspondingly.

\hskip 2em
The computed results are reported in Tables 3 and 4 in the same pattern as in Tables 1 and 2, again revealing a great disparity between algorithms. From the computational cost of the two algorithms under different $c$'s in the table, the t-IRLS algorithm is more suitable for high-dimensional problem as $c$ increases.
Meanwhile, we mention that the solution accuracy of the t-IRLS algorithm largely depends on the accuracy of the algorithm used to solve $\mathcal{X}^*_t$.
\vspace{-0.3cm}
\begin{table}[htb]
	\caption{Results for Example 2 with $m=50$ }
	\centering
	\begin{tabular}{ccccc}
		\toprule
		$c$&Method&$\mathrm{Err}$&$k$&CPU                \\
		\toprule
		\multirow{2}*{$10$} &t-IRLS &$7.9938\mathrm{e}\mbox{-}05$ &-&$0.1092$\\
		~&t-GKT&$7.7431\mathrm{e}\mbox{-}04$&$5$&$1.030$\\
		\toprule
		\specialrule{0em}{0pt}{4pt}
		\multirow{2}*{$50$} &t-IRLS &$8.5286\mathrm{e}\mbox{-}05$&- &$0.09547$\\
		~&t-GKT&$8.3828\mathrm{e}\mbox{-}05$&$5$&$4.401$\\
		\toprule
		\specialrule{0em}{0pt}{4pt}
		\multirow{2}*{$150$} &t-IRLS &$6.4185\mathrm{e}\mbox{-}05$&- &$0.1577$\\
		~&t-GKT&$6.4418\mathrm{e}\mbox{-}05$&$4$&$9.548$\\
		\toprule
		\specialrule{0em}{0pt}{4pt}
		\multirow{2}*{$200$} &t-IRLS &$8.8434\mathrm{e}\mbox{-}04$ &-&$0.1219$\\
		~&t-GKT&$8.8432\mathrm{e}\mbox{-}04$&$4$&$15.31$\\
		\toprule
	\end{tabular}
\end{table}
\vspace{-0.3cm}

\begin{table}[htb]
	\caption{Results for Example 2 with $m=256$ }
	\centering
	\begin{tabular}{ccccc}
		\toprule
		$c$&Method&$\mathrm{Err}$&$k$&CPU                \\
		\toprule
		\multirow{2}*{$10$} &t-IRLS &$1.0123\mathrm{e}\mbox{-}04$ &-&$3.024$\\
		~&t-GKT&$1.0124\mathrm{e}\mbox{-}04$&$4$&$26.96$\\
		\toprule
		\specialrule{0em}{0pt}{4pt}
		\multirow{2}*{$30$} &t-IRLS &$1.0676\mathrm{e}\mbox{-}04$&- &$3.206$\\
		~&t-GKT&$1.0667\mathrm{e}\mbox{-}05$&$4$&$87.05$\\
		\toprule
		\specialrule{0em}{0pt}{4pt}
		\multirow{2}*{$50$} &t-IRLS &$3.9639\mathrm{e}\mbox{-}05$&- &$3.487$\\
		~&t-GKT&$3.9577\mathrm{e}\mbox{-}05$&$4$&$138.3$\\
		\toprule
		\specialrule{0em}{0pt}{4pt}
		\multirow{2}*{$70$} &t-IRLS &$4.0614\mathrm{e}\mbox{-}04$ &-&$2.595$\\
		~&t-GKT&$4.0812\mathrm{e}\mbox{-}04$&$3$&$153.2$\\
		\toprule
	\end{tabular}
\end{table}
\vspace{-0.3cm}
\section{Conclusion}

\hskip 2em The main contribution of this paper is the study of extending the approach of Zhang et al. (2016) to incremental tensor regularized least squares.
The resulting method is computationally efficient than standard methods applied directly to a new tensor system.
Specifically, a t-QR factorization updating technique has been proposed in order to reduce the cost of treating an enlarged tensor.
It must be noted, however, that the dominant work in updating $\mathcal{X}^*$ generally depends on choosing an invertible tube scalar of tensor $\mathcal{W}$,
which is a key issue to obtain $\mathcal{C}_1*\frac{1}{\mu}\mathbf{e}_1$.
Further improvement for lowering the cost of the t-IRLS relies on the classical iterative methods for the t-RLS problem in the second step.
\section*{Acknowledgments}
This work is supported by the National Natural Science Foundation of China under grant No. 11801534.
\bibliographystyle{siam}
\bibliography{tIRLS}
\end{document}